\def\N{\mathbf{N}}
\def\1{\mathbf{1}}
\def\be{\beta}
\def\ep{\epsilon}
\def\de{\delta}
\newtheorem{prop}{Proposition}[section]
\newtheorem{theorem}{Theorem}[section]
\newtheorem{remark}{Remark}
\newcommand{\la}{\lambda}
\begin{document}
\title{Corruption and botnet defense: a mean field game approach
\thanks{Supported by RFFI grant  No 14-06-00326}}
\author{
V. N. Kolokoltsov\thanks{Department of Statistics, University of Warwick,
 Coventry CV4 7AL UK,  Email: v.kolokoltsov@warwick.ac.uk and associate member  of Institute of Informatics Problems, FRC CSC RAS}
 and O. A. Malafeyev\thanks{Fac. of Appl. Math. and Control Processes, St.-Petersburg State Univ., Russia}}

\maketitle

\begin{abstract}
Recently developed toy models for the mean-field games of corruption and botnet defence in cyber-security
with three or four states of agents are extended to a more general mean-field-game model with $2d$ states, $d\in \N$.
 In order to tackle new technical difficulties arising from a larger state-space
we introduce new asymptotic regimes, namely small discount and small interaction asymptotics.
Moreover, the link between stationary and time-dependent solutions is established rigorously leading
to a performance of the turnpike theory in a mean-field-game setting.
\end{abstract}

{\bf Key words:} mean-field game, stable equilibrium, turnpike, botnet defense, cyber-security, corruption, inspection, social norms, disease spreading

{\bf Mathematics Subject Classification (2010)}: {91A06, 91A15, 91A40, 91F99}

\section{Introduction}

Toy models for the mean-field games of corruption and botnet defense in cyber-security were developed in
\cite{KolMa15}  and \cite{KolBen}. These were games with three and four states of the agents respectively.
Here we develop a more general mean-field-game model with $2d$ states, $d\in \N$, that extend the models of
\cite{KolMa15}  and \cite{KolBen}. In order to tackle new technical difficulties arising from a larger state-space
we introduce new asymptotic regimes, small discount and small interaction asymptotics. Hence the properties
 that we obtain for the new model do not cover more precise
results of \cite{KolMa15}  and \cite{KolBen} (with the full classification of the bifurcation points),
but capture their main qualitative and quantitative features and provide regular solutions away from the points of bifurcations.
Apart from new modeling, this paper contributes to one of the key questions in the modern study of mean-field games, namely, what
is the precise link between stationary and time -dependent solutions. This problem is sorted out here for a concrete model, but
the method can be definitely used in more general situations.

On the one hand, our model is a performance of the general pressure-and-resistance-game framework of \cite{KolPresRes}
and the nonlinear Markov battles of \cite{Ko12}, and on the other hand, it represents a simple example of mean-field- and
evolutionary-game modeling of networks. Initiating the development of the
latter, we stress already here that two-dimensional arrays of states arise naturally in many situations, one of the dimensions being
controlled mostly by the decision of agents (say, the level of tax evasion in the context of inspection games) and the other
one by a principal (major player) or evolutionary interactions (say, the level of agents in bureaucratic staircase,
the type of a computer virus used by botnet herder, etc).

We shall dwell upon two  basic interpretations of our model: corrupted bureaucrats playing against the principal (say, governmental
representative, also referred in literature as benevolent dictator) or computer owners playing against a botnet herder
(which then takes the role of the principal), which tries to infect the computers with viruses. Other interpretations can be
done, for instance,  in the framework of the inspection games (inspector and tax payers) or of the disease spreading
in epidemiology (among animals or humans), or the defense against a biological weapon.
Here we shall keep the principal in the background concentrating on the behavior of
small players (corrupted  bureaucrats or computer owners), which we shall refer to as agents or players.

The paper is organized as follows. In the next section we introduce our model
specifying in this context the basic notions of the mean-field-game (MFG) consistency problem in its dynamic and stationary versions.
In section \ref{secstationary}  we calculate explicitly all non-degenerate solutions of the stationary MFG problem.
In section \ref{sectimedepsol} we show how from a stationary solution one can construct a class of full time-dependent solutions
satisfying the so-called turnpike property around the stationary one.

We complete this introductory section with short bibliographical notes.

Analysis of the spread of corruption in bureaucracy is a well recognized area of the application of game theory,
which attracted attention of many researchers. General surveys can be found in \cite{Aidt}, \cite{Jain}, \cite{LeTsi98}.
More recent literature is reviewed in \cite{KolMa15}, see also \cite{Mala14}, \cite{Mala15} for
electric and engineering interpretations of corruption games.

The use of game theory in modeling attacker-defender has
been extensively adopted in the computer security domain recently, see \cite{BenKaHo},
\cite {LiLiSt} and \cite{LyWi} and bibliography
there for more details.

Mean-field games present a quickly developing area of the game theory.
Their study was initiated by Lasry-Lions \cite{LL2006} and Huang-Malhame-Caines
\cite{HCM3} and has been quickly developing since then,
see \cite{Ba13}, \cite{BenFr}, \cite{GLL2010}, \cite{Gomsurv}, \cite{Cain14} for recent surveys,
as well as \cite{Car13}, \cite{CarD13}, \cite{BasRa14}, \cite{KolTrYang14}
\cite{TemBas14}, \cite{BaTemBa} and references therein for some further developments.

\section{The model}
\label{secmodel}

We assume that any agent has $2d$ states: $iI$ and $iS$, where $i\in \{1, \cdots , d\}$ and is referred to as a strategy.
In the first interpretation the letters $S$ or $I$ designate the senior or initial position of a bureaucrat in the
hierarchical staircase and $i$ designates the level or type of corruptive behavior (say, the level of bribes one asks from customers
or, more generally, the level of illegal profit he/she aims at). In the literature on corruption the state $I$ is often denoted by $R$
and is referred to as the reserved state. It is interpreted as a job of the lowest salary given to the not trust-worthy bureaucrats.
In the second interpretation  the letters $S$ or $I$ designate
susceptible or infected states of computers and $i$ denotes the level or the type of defense system available on the market.

We assume that the choice of a strategy depends exclusively on the decision of an agent.
 The control parameter $u$ of each player may have $d$ values denoting the strategy the agent prefers at a moment.
 As long as this coincides with the current strategy, the updating of a strategy does not occur.
Once the decision to change $i$ to $j$ is made, the actual updating is supposed to occur with a certain rate $\la$.
Following \cite{KolBen}, we shall be mostly interested in the asymptotic regime of fast execution of individual decisions, that is, $\la \to \infty$.

 The change between $S$ and $I$ may have two causes: the action of the principal (pressure game component) and of the peers (evolutionary component).
In the first interpretation the principal can promote the bureaucrats from the initial to the senior position or degrade them
to the reserved initial position, whenever their illegal behavior is discovered. The peers can also take part in this process contributing to
the degrading of corrupted bureaucrats, for instance, when they trespass certain social norms.
In the second interpretation the principal, the botnet herder, infects computers with the virus by direct attacks turning $S$ to $I$,
and the virus then spreads through the network of computers by a pairwise interaction. The recovery change from $I$ to $S$
is due to some system of repairs which can be different in different protection levels $i$.

Let $q^i_+$ denote the recovery rates of upgrading from $iR$ to $iS$
and $q^i_-$ the rates of degrading (punishment or infection) from state $iR$ to $iS$,
which are independent of the state of other agents (pressure component), and let $\be_{ij}$ denote the rates
at which an agent in state $iI$ can stimulate the degrading (punishment or infection) of another agent from $jS$ to $jI$
(evolutionary component). For simplicity we ignore here the possibility of upgrading changes from $jS$ to $jI$ due to the interaction with peers.

A state of the system is a vector $n=(n_{1S}, n_{1I}, \cdots , n_{dS}, n_{dI})$
with coordinates presenting the number of agents in the corresponding states,
or its normalized version $x=(x_{1S}, x_{1I}, \cdots , x_{dS}, x_{dI})=n/N$ with $N=n_{1S}+ n_{1I}+ \cdots + n_{dS}+ n_{dI}$
the total number of agents.

Therefore, assuming that all players have the same strategy $u^{com}_t=\{u^{com}(iS), u^{com}(iI)\}$,
 the evolution of states in the limit of large number of players $N\to \infty$ is given by the equations
 \begin{equation}
 \label{eqmainkineticbotnetmultiev}
\begin{aligned}
& \dot x_{iI} =\la \sum_{j\neq i} x_{jI} \1(u^{com}(jI)=i)-\la \sum_{j\neq i} x_{iI} \1(u^{com}(iI)=j)
 + x_{iS} q_-^i  -x_{iI} q_+^i +\sum_j x_{iS}x_{jI} \be_{ji}, \\
& \dot x_{iS} =\la \sum_{j\neq i} x_{jS} \1(u^{com}(jS)=i)- \la \sum_{j\neq i} x_{iS} \1(u^{com}(iS)=j)
 - x_{iS} q_-^i +x_{iI} q_+^i -\sum_j x_{iS}x_{jI} \be_{ji},
 \end{aligned}
 \end{equation}
 for all $i=1, \cdots ,d$. Here and below $\1(M)$ denotes the indicator function of a set $M$.

\begin{remark}
It is well known that evolutions of this type can be derived rigorously as the dynamic law of large numbers
for the corresponding Markov models of a finite number of players, see detail e.g. in \cite{Ko12} or \cite{KolPresRes}.
\end{remark}

To specify the optimal behavior of agents we have to introduce payoffs in different states and possibly
 costs for transitions. For simplicity we shall ignore here the latter.
Talking about corrupted agents it is natural to talk about maximizing profit, while
talking about infected computers it is natural to talk about minimizing costs.
To unify the exposition we shall deal with the minimization of costs, which is
equivalent to the maximization of their negations.

Let $w_I^i$ and $w_S^i$ denote the costs per time-unit of staying in $iI$ and $iS$ respectively.
According to our interpretation of $S$ as a better state, $w^i_S<w^i_I$ for all $i$.

Given the evolution of the states $x=x(s)$ of the whole system on a time interval $[t,T]$, the individually optimal costs $g(iI)$ and $g(iS)$ and
individually optimal control $u^{ind}_s(iI)$ and $u^{ind}_s(iS)$ can be found from the HJB equation

\begin{equation}
 \label{eqHJBbotnetmultiev}
\begin{aligned}
& \dot g(iI)+\la \min_u \sum_{j=1}^d \1(u(iI)=j)(g(jI)-g(iI)) +q^i_+(g(iS)-g(iI)) +w_I^i=0, \\
& \dot g(iS)+\la \min_u \sum_{j=1}^d \1(u(iS)=j)(g(jS)-g(iS)) +q^i_- (g(iI)-g(iS)) \\
& \quad \quad \quad + \sum_{j=1}^d \be_{ji} x_{jI}(s)(g(iI)-g(iS))+w_S^i=0.
\end{aligned}
\end{equation}

The basic {\it MFG consistency equation} for a time interval $[t,T]$ can now be written as $u_s^{com}=u_s^{ind}$.

\begin{remark}
The reasonability of this condition in the setting of the large number of players is more or less obvious.
And in fact in many situations it was proved rigorously that
its solutions represent the $\ep$-Nash equilibrium for the corresponding Markov model of $N$ players,
with $\ep \to 0$ as $N\to \infty$, see e.g. \cite{BasRa14} for finite state models considered here.
\end{remark}

In this paper we shall mostly work with discounted payoff with the discounting coefficient $\de>0$, in which case the HJB equation
for the discounted optimal payoff $e^{-s\de}g$ of an individual player with any time horizon $T$ writes down as

\begin{equation}
 \label{eqHJBbotnetmultievdisc}
\begin{aligned}
& \dot g(iI)+\la \min_u \sum_{j=1}^d \1(u(iI)=j)(g(jI)-g(iI)) +q^i_+(g(iS)-g(iI)) +w_I^i=\de g(iI), \\
& \dot g(iS)+\la \min_u \sum_{j=1}^d \1(u(iS)=j)(g(jS)-g(iS)) +q^i_- (g(iI)-g(iS)) \\
& \quad \quad \quad + \sum_{j=1}^d \be_{ji} x_{jI}(s)(g(iI)-g(iS))+w_S^i=\de g(iS).
\end{aligned}
\end{equation}

Notice that since this is an equation in a Euclidean space with Lipschitz coefficients, it has a unique solution
for $s\le T$ and any given boundary condition $g$ at time $T$ and any measurable functions $x_{iI}(s)$.

For the discounted payoff the basic {\it MFG consistency equation} $u_s^{com}=u_s^{ind}$
for a time interval $[t,T]$ can be reformulated by saying that $x,u,g$ solve
the coupled forward-backward system \eqref{eqmainkineticbotnetmultiev}, \eqref{eqHJBbotnetmultievdisc}, so that $u_s^{com}$
used in \eqref{eqmainkineticbotnetmultiev} coincide with the minimizers in \eqref{eqHJBbotnetmultievdisc}.
The main objective of the paper is to provide a general class of solutions of the discounted MFG consistency equation
with stationary (time-independent) controls $u_s^{com}$.

As a first step to this objective we shall analyse the fully stationary solutions, when the evolution
 \eqref{eqmainkineticbotnetmultiev} is replaced by the corresponding fixed point condition:

 \begin{equation}
 \label{eqmainkineticbotnetmultist}
\begin{aligned}
& \la \sum_{j\neq i} x_{jI} \1(u^{com}(jI)=i)-\la \sum_{j\neq i}  x_{iI} \1(u^{com}(iI)=j)
 +x_{iS} q_-^i  -x_{iI} q_+^i +\sum_j x_{iS}x_{jI} \be_{ji}=0, \\
& \la \sum_{j\neq i} x_{jS} \1(u^{com}(jS)=i)-\la \sum_{j\neq i}  x_{iS} \1(u^{com}(iS)=j)
 -x_{iS} q_-^i +x_{iI} q_+^i -\sum_j x_{iS}x_{jI} \be_{ji}=0.
 \end{aligned}
 \end{equation}

There are two standard stationary optimization problems naturally linked with a dynamic one,
one being the search for the average payoff for long period game, and another the search for
discounted optimal payoff. The first is governed by the solutions of HJB of the form $(T-s)\mu +g$,
linear in $s$ (then $\mu$ describing the optimal average payoff),
so that $g$ satisfies the stationary HJB equation:

\begin{equation}
 \label{eqHJBbotnetmultiav}
\begin{aligned}
& \la \min_u \sum_{j=1}^d \1(u(iI)=j)(g(jI)-g(iI)+c_{ij}) +q^i_+(g(iS)-g(iI)) +w_I^i =\mu, \\
& \la \min_u \sum_{j=1}^d \1(u(iS)=j)(g(jS)-g(iS)+c_{ij}) +q^i_- (g(iI)-g(iS)) \\
& \quad \quad \quad + \sum_{j=1}^d \be_{ji} x_{jI}(g(iI)-g(iS))+w_S^i =\mu.
\end{aligned}
\end{equation}

In the second problem, if the discounting coefficient is $\de$,
the stationary discounted optimal  payoff $g$  satisfies the stationary version of \eqref{eqHJBbotnetmultievdisc}:

\begin{equation}
 \label{eqHJBbotnetmultidis}
\begin{aligned}
& \la \min_u \sum_{j=1}^d \1(u(iI)=j)(g(jI)-g(iI)+c_{ij}) +q^i_+(g(iS)-g(iI)) +w_I^i =\de g(iI), \\
& \la \min_u \sum_{j=1}^d \1(u(iS)=j)(g(jS)-g(iS)+c_{ij}) +q^i_- (g(iI)-g(iS)) \\
& \quad \quad \quad + \sum_{j=1}^d \be_{ji} x_{jI}(g(iI)-g(iS))+w_S^i =\de g(iS).
\end{aligned}
\end{equation}

In \cite{KolMa15}  and \cite{KolBen} we concentrated on the first approach, and here
we shall concentrate on the second one, with a discounted payoff.
The {\it stationary MFG consistency condition}
is the coupled system of equations \eqref{eqmainkineticbotnetmultist} and
\eqref{eqHJBbotnetmultidis}, so that the individually optimal stationary control
$u^{ind}$ found from \eqref{eqHJBbotnetmultidis} coincides with
the common stationary control $u^{com}$ from \eqref{eqmainkineticbotnetmultist}.

For simplicity we shall be interested in {\it non-degenerate controls} $u^{ind}$
characterized by the condition that the minimum in \eqref{eqHJBbotnetmultidis}
is always attained on a single value of $u$.

A new technical novelty as compared with \cite{KolBen} and \cite{KolMa15}
will be systematic working in the asymptotic regimes of small discount $\de$
and small interaction coefficients $\be_{ij}$. This approach leads to more or
less explicit calculations of stationary MFG solutions and their further justification.

\section{Stationary MFG problem}
\label{secstationary}

The following result identifies all possible stationary non-degenerate controls that can occur as
solutions of  \eqref{eqHJBbotnetmultidis}.

\begin{prop}
\label{proponstrate}
Non-degenerate controls solving \eqref{eqHJBbotnetmultidis} could be only of the type
$[i(I),k(S)]$: switch to strategy $i$ when in $I$ and to $k$ when in $S$.
\end{prop}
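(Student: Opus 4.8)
The plan is to show that any non-degenerate minimizer in \eqref{eqHJBbotnetmultidis} must be a \emph{constant} control: the optimal target strategy in the $I$-layer is the same index $i$ for every starting state, and likewise the optimal target in the $S$-layer is a single index $k$, independent of the starting state. The key observation is that the minimization in the first line of \eqref{eqHJBbotnetmultidis} decouples over starting states: for a state $jI$ the agent chooses $u(jI)$ to minimize $g(u(jI),I)-g(jI)$, i.e. simply to minimize $g(\cdot,I)$ over the target index. Since $g(\cdot,I)$ is a fixed vector (once $g$ is a solution), its minimum over $\{1,\dots,d\}$ is attained; non-degeneracy forces it to be attained at a \emph{unique} index, call it $i$, and then $u(jI)=i$ for every $j$. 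The identical argument applied to the second line of \eqref{eqHJBbotnetmultidis}, where the agent minimizes $g(\cdot,S)$ over the target index, yields a unique minimizer $k$ of $g(\cdot,S)$, so $u(jS)=k$ for all $j$. Hence the control is of type $[i(I),k(S)]$.

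The steps, in order, are: (i) fix a solution $g$ of \eqref{eqHJBbotnetmultidis} and observe that in each of the two HJB lines the term being minimized over $u$ depends on $u(jI)$ (resp. $u(jS)$) only through $g(u(jI),I)$ (resp. $g(u(jS),S)$), because the ``diagonal'' term $-g(iI)$ and the remaining $q$- and $\be$-terms carry no $u$-dependence, and $c_{ij}$ does not appear in \eqref{eqHJBbotnetmultidis} as written (the cost-free case); (ii) conclude that, state by state, the minimizer is an $\arg\min$ of the single function $r\mapsto g(rI)$ over $r\in\{1,\dots,d\}$ in the $I$-layer, and of $r\mapsto g(rS)$ in the $S$-layer; (iii) invoke the non-degeneracy hypothesis (the minimum in \eqref{eqHJBbotnetmultidis} is attained at a single $u$) to deduce that these $\arg\min$ sets are singletons $\{i\}$ and $\{k\}$ respectively; (iv) read off that $u^{ind}(jI)=i$ and $u^{ind}(jS)=k$ for all $j$, which is exactly the asserted form.

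I do not expect a serious obstacle here; the statement is essentially a structural observation about how the $\la$-term enters \eqref{eqHJBbotnetmultidis}, and the only point requiring a little care is the bookkeeping of which terms do and do not depend on the control. In particular one should note that the $\la$-coefficient is irrelevant to the location of the minimizer (it only scales the value), and that the $\de g(iI)$ on the right-hand side, the $q^i_\pm$ terms, the interaction sum $\sum_j \be_{ji}x_{jI}(g(iI)-g(iS))$, and $w^i$ are all independent of $u$, so they play no role in selecting the optimal target index. One should also remark that although the proposition constrains the \emph{form} of admissible controls, it does not assert existence; determining for which parameter ranges such a $[i(I),k(S)]$ control actually arises as a genuine self-consistent solution is the content of the subsequent analysis, and is where the small-$\de$ and small-$\be$ asymptotics will enter.
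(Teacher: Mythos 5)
Your proof is correct and takes essentially the same route as the paper's one-line argument, which likewise observes that if switching to $i$ is optimal from some state then $g(iI)<g(lI)$ for all $l$ (strictly, by non-degeneracy), so switching to $i$ is optimal from every state, and similarly in the $S$-layer. The only caveat worth flagging is that the decoupling you use requires the transition costs $c_{ij}$, which do appear literally in \eqref{eqHJBbotnetmultidis}, to vanish — which the paper does assume ("for simplicity we shall ignore the latter"), so your bookkeeping is consistent with the intended setting.
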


\begin{proof}
If moving from the strategy $k$ to the strategy $i$ is optimal, then $g(i) < g(l)$ for all $l$ and hence moving from $m$ to $i$
is optimal for any $m$.
\end{proof}

Let us consider first the control $[i(I),i(S)]$ denoting it by $\hat u^i$:
\[
\hat u^i(jS)=\hat u^i(jI)=i, \quad j=1, \cdots, d.
\]
We shall refer to the control $\hat u^i$ as the one with the strategy $i$ individually optimal.

The control $\hat u^i$ and the corresponding distribution $x$ solve
the stationary MFG problem if
they solve the corresponding HJB \eqref{eqHJBbotnetmultidis}, that is
\begin{equation}
 \label{eqHJBOpsinglei1}
\left\{
\begin{aligned}
& q^i_+(g(iS)-g(iI)) +w^i_I =\de g(iI), \\
& q^i_-(g(iI)-g(iS)) + \sum_{k}\be_{ki} x_{kI}(g(iI)-g(iS))+w_S^i =\de g(iS), \\
& \la (g(iI)-g(jI)) +q^j_+(g(jS)-g(jI)) +w_I^j =\de g(jI), \quad j\neq i, \\
& \la (g(iS)-g(jS)) +q^j_- (g(jI)-g(jS)) \\
& \quad \quad \quad + \sum_k\be_{kj} x_{kI}(g(jI)-g(jS))+w_S^j =\de g(jS), \quad j\neq i,
\end{aligned}
\right.
\end{equation}
where for all $j\neq i$
\begin{equation}
 \label{eqHJBOpsinglei2}
g(iI) \le g(jI), \quad g(iS) \le g(jS),
\end{equation}
and $x$ is a fixed point of the evolution \eqref{eqmainkineticbotnetmultist}  with $u^{com}=\hat u^i$, that is

\begin{equation}
 \label{eqkineticstsinglei1}
\left\{
\begin{aligned}
& x_{iS} q_-^i -x_{iI} q_+^i +\sum_j x_{iS}x_{jI} \be_{ji}
 +\la \sum_{j\neq i} x_{jI}=0, \\
&  -x_{iS} q_-^i +x_{iI} q_+^i -\sum_j x_{iS}x_{jI} \be_{ji}
 +\la \sum_{j\neq i} x_{jS}=0, \\
 & x_{jS} q_-^j -x_{jI} q_+^j +\sum_k x_{jS}x_{kI} \be_{kj}
 -\la x_{jI}=0, \quad j\neq i,\\
& -x_{jS} q_-^j +x_{jI} q_+^j -\sum_k x_{jS}x_{kI} \be_{ji}
 -\la x_{jS}=0, \quad j\neq i.
 \end{aligned}
\right.
 \end{equation}

 This solution $(\hat u^i,x)$ is stable if $x$ is a stable fixed point of the evolution \eqref{eqmainkineticbotnetmultiev}
 with $u^{com}=\hat u^i$, that is,  of the evolution

 \begin{equation}
 \label{eqkineticstsinglei2}
\left\{
\begin{aligned}
& \dot x_{iI}=x_{iS} q_-^i -x_{iI} q_+^i +\sum_j x_{iS}x_{jI} \be_{ji}
 +\la \sum_{j\neq i} x_{jI}, \\
&  \dot x_{iS} =-x_{iS} q_-^i +x_{iI} q_+^i -\sum_j x_{iS}x_{jI} \be_{ji}
 +\la \sum_{j\neq i} x_{jS}, \\
 & \dot x_{jI} = x_{jS} q_-^j -x_{jI} q_+^j +\sum_k x_{jS}x_{kI} \be_{kj}
 -\la x_{jI}, \quad j\neq i,\\
& \dot x_{jS} = -x_{jS} q_-^j +x_{jI} q_+^j -\sum_k x_{jS}x_{kI} \be_{ji}
 -\la x_{jS}, \quad j\neq i.
 \end{aligned}
\right.
 \end{equation}

Adding together the last two equations of \eqref{eqkineticstsinglei1} we find that $x_{jI}=x_{jS}=0$ for $j\neq i$,
as one could expect. Consequently, the whole system \eqref{eqkineticstsinglei1} reduces to the single equation
\[
x_{iS}q^i_- +x_{iI} \be_{ii} x_{iS} -x_{iI} q^i_+ =0,
\]
which, for $y=x_{iI}$, $1-y=x_{iS}$, yields the quadratic equation
\[
Q(y)= \be_{ii}y^2+y(q^i_+ -\be_{ii} +q^i_-) -q^i_-=0,
\]
with the unique solution on the interval $(0,1)$:
\begin{equation}
\label{eqkineticstsinglei3}
x^*=\frac{1}{2\be_{ii}}\left[ \be_{ii}-q^i_+-q^i_-
+\sqrt{(\be_{ii} +q^i_-)^2+(q^i_+)^2-2 q^i_+ (\be_{ii} -q^i_-)}\right].
\end{equation}

To analyze stability of the fixed point $x_{iI}=x^*, x_{iS}=1-x^*$ and $x_{jI}=x_{jS}=0$ for $j\neq i$,
we introduce the variables $y=x_{iI}-x^*$. In terms of $y$ and $x_{jI},x_{jS}$ with $j\neq i$, system
\eqref{eqkineticstsinglei2} rewrites as
\begin{equation}
 \label{eqkineticstsinglei4}
\left\{
\begin{aligned}
& \dot y =[1-x^*-y-\sum_{j\neq i} (x_{jI}+x_{jS})] [q_- +\sum_{k\neq i} x_{kI} \be_{ki}+(y+x^*)\be_{ii}]
 -(y+x^*) q^i_+ +\la \sum_{j\neq i} x_{jI}, \\
 & \dot x_{jI} =x_{jS} [q_-^j  +\sum_{k\neq i}x_{kI} \be_{kj}+(y+x^*) \be_{ij}]
 -x_{jI} q_+^j -\la x_{jI}, \quad j\neq i,\\
& \dot x_{jS} =-x_{jS} [q_-^j +\sum_{k\neq i}x_{kI} \be_{kj}+(y+x^*) \be_{ij}]
 +x_{jI} q_+^j -\la x_{jS}, \quad j\neq i.
 \end{aligned}
\right.
\end{equation}

Its linearized version around the fixed point zero is
\[
\left\{
\begin{aligned}
& \dot y =(1-x^*)(\sum_{k\neq i} x_{kI} \be_{ki}+y\be_{ii})-[y+\sum_{k\neq i} (x_{kI}+x_{kS})](q_-^i +x^*\be_{ii})
 -y q^i_+ +\sum_{k\neq i} \la x_{kI}, \\
 & \dot x_{jI} =x_{jS} (q_-^j +x^* \be_{ij})
 -x_{jI} q_+^j -\la x_{jI}, \quad j\neq i,\\
& \dot x_{jS} =-x_{jS} (q_-^j +x^* \be_{ij})
 +x_{jI} q_+^j -\la x_{jS}, \quad j\neq i.
 \end{aligned}
\right.
\]

Since the equations for $x_{jI}, x_{jS}$ contain neither $y$ nor other variables, the eigenvalues
of this linear system are
\[
\xi_i=(1-2x^*)\be_{ii} -q^i_- -q^i_+,
\]
and $(d-1)$ pairs of eigenvalues arising from $(d-1)$ systems
\[
\left\{
\begin{aligned}
& \dot x_{jI} =x_{jS} (q_-^j +x^* \be_{ij})
 -x_{jI} q_+^j -\la x_{jI}, \quad j\neq i,\\
& \dot x_{jS} =-x_{jS} (q_-^j +x^* \be_{ij})
 +x_{jI} q_+^j -\la x_{jS}, \quad j\neq i,
 \end{aligned}
\right.
\]
that is

\[
\left\{
\begin{aligned}
& \xi_1^j = -\la -(q_+^j+q^j_-+x^* \be_{ii}) \\
& \xi_2^j = -\la.
\end{aligned}
\right.
\]

These eigenvalues being always negative, the condition of stability is reduced to the negativity of the first eigenvalue $\xi_i$:
\[
2x^*> 1-\frac{q^i_+ +q^i_-}{\be_{ii}}.
\]
But this is true due to \eqref{eqkineticstsinglei3} implying that this fixed point is always stable (by the Grobman-Hartman theorem).

Next, the HJB equation \eqref{eqHJBOpsinglei1} takes the form
\begin{equation}
 \label{eqHJBOpsinglei3}
\left\{
\begin{aligned}
& q^i_+(g(iS)-g(iI)) +w^i_I =\de g(iI), \\
& q^i_-(g(iI)-g(iS)) + \be_{ii} x^*(g(iI)-g(iS))+w_S^i =\de g(iS), \\
& \la (g(iI)-g(jI)) +q^j_+(g(jS)-g(jI)) +w_I^j =\de g(jI), \quad j\neq i, \\
& \la (g(iS)-g(jS)) +q^j_- (g(jI)-g(jS)) \\
& \quad \quad \quad + \be_{ij} x^*(g(jI)-g(jS))+w_S^j =\de g(jS), \quad j\neq i,
\end{aligned}
\right.
\end{equation}

Subtracting the first equation from the second one yields
\begin{equation}
 \label{eqHJBOpsinglei4}
g(iI)-g(iS)=\frac{w^i_I-w^i_S}{q_-^i+q_+^i+\be_{ii}x^* +\de}.
\end{equation}
In particular, $g(iI)>g(iS)$ always, as expected.
Next, by the first equation of \eqref{eqHJBOpsinglei3},
\begin{equation}
 \label{eqHJBOpsinglei5}
\de g(iI)=w^i_I- \frac{q^i_+(w^i_I-w^i_S)}{q_-^i+q_+^i+\be_{ii}x^* +\de}.
\end{equation}

Consequently,
\begin{equation}
 \label{eqHJBOpsinglei6}
\de g(iS)=w^i_I- \frac{(q^i_++\de)(w^i_I-w^i_S)}{q_-^i+q_+^i+\be_{ii}x^* +\de}
=w^i_S+ \frac{(q^i_-+\be_{ii}x^*)(w^i_I-w^i_S)}{q_-^i+q_+^i+\be_{ii}x^* +\de}.
\end{equation}

Subtracting the third equation of \eqref{eqHJBOpsinglei3} from the fourth one yields
\[
(\la +q^j_++q^j_- +\be_{ii}x^*+\de)(g(jI)-g(jS))-\la (g(iI)-g(iS))=w^i_I-w^i_S,
\]
implying
\[
g(jI)-g(jS)=\frac{w^j_I-w^j_S+\la (g(iI)-g(iS))}{\la +q^j_++q^j_-+\be_{ij}x^*+\de}
\]
\begin{equation}
 \label{eqHJBOpsinglei7}
=g(iI)-g(iS)+[(w^j_I-w^j_S)-(g(iI)-g(iS))(q^j_++q^j_-+\be_{ij}x^*+\de)]\la^{-1} +O(\la^{-2}).
\end{equation}

From the fourth equation of \eqref{eqHJBOpsinglei3} it now follows that
\[
(\de +\la) g(jI)=w^j_I-q^j_+(g(jI)-g(jS))+\la g(iI),
\]
so that
\begin{equation}
 \label{eqHJBOpsinglei8}
g(jI)=g(iI)+[w^j_I-q^j_+(g(iI)-g(iS)) -\de g(iI)]\la^{-1} +O(\la^{-2}).
\end{equation}

Consequently,
\[
g(jS)=g(jI)-(g(jI)-g(jS))
\]
\begin{equation}
 \label{eqHJBOpsinglei9}
=g(iS)+[w^j_S+(q^j_- +\be_{ii}x^*)(g(iI)-g(iS)) -\de g(iS)]\la^{-1} +O(\la^{-2}).
\end{equation}

Thus the consistency conditions \eqref{eqHJBOpsinglei2} in the main order in $\la \to \infty$ become
\[
w^j_I-q^j_+(g(iI)-g(iS)) -\de g(iI)\ge 0, \quad
w^j_S+(q^j_- +\be_{ii}x^*)(g(iI)-g(iS)) -\de g(iS) \ge 0,
\]
or equivalently
\begin{equation}
 \label{eqHJBOpsinglei10}
w^j_I-w^i_I  \ge \frac{(q^j_+ -q^i_+)(w^i_I-w^i_S)}{q_-^i+q_+^i+\be_{ii}x^* +\de}, \quad
w^j_S-w^i_S  \ge \frac{[q^i_- -q^j_- +(\be_{ii}-\be_{ij})x^*](w^i_I-w^i_S)}{q_-^i+q_+^i+\be_{ii}x^* +\de}.
\end{equation}

In the first order in small $\be_{ii}$ this gets the simpler form, independent of $x^*$:
\begin{equation}
 \label{eqHJBOpsinglei11}
\frac{w^j_I-w^i_I}{w^i_I-w^i_S}  \ge \frac{q^j_+ -q^i_+}{q_-^i+q_+^i +\de}, \quad
\frac{w^j_S-w^i_S}{w^i_I-w^i_S}  \ge \frac{q^i_- -q^j_-}{q_-^i+q_+^i +\de}.
\end{equation}

Summarizing, we proved the following.

\begin{prop}
\label{propstriiop}
If \eqref{eqHJBOpsinglei11} holds for all $j\neq i$ with the strict inequality, then for sufficiently
 large $\la$ and sufficiently small $\be_{ij}$ there exists a unique solution
to the stationary MFG consistency problem  \eqref{eqmainkineticbotnetmultist} and
\eqref{eqHJBbotnetmultidis}  with the optimal control $\hat u^i$, the stationary distribution is  $x_i^I=x^*, x_i^S=1-x^*$ with
$x^*$ given by \eqref{eqkineticstsinglei3} and it is stable; the optimal payoffs are given by
\eqref{eqHJBOpsinglei5}, \eqref{eqHJBOpsinglei6}, \eqref{eqHJBOpsinglei8}, \eqref{eqHJBOpsinglei9}.
Conversely, if for all sufficiently large $\la$ there exists a solution
to the stationary MFG consistency problem  \eqref{eqmainkineticbotnetmultist} and
\eqref{eqHJBbotnetmultidis}  with the optimal control $\hat u^i$, then
\eqref{eqHJBOpsinglei10} holds.
\end{prop}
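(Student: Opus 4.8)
\noindent\emph{Proof strategy.} The statement is essentially an assembly of the computations carried out above, split into the direct implication (the strict form of \eqref{eqHJBOpsinglei11} is sufficient) and its converse (\eqref{eqHJBOpsinglei10} is necessary). For the direct part I would fix the common control $u^{com}=\hat u^i$ and treat the two halves of the consistency system in turn. \emph{(i) The kinetic part.} As already observed, adding the last two equations of \eqref{eqkineticstsinglei1} forces $x_{jI}=x_{jS}=0$ for $j\neq i$, the system collapses to the quadratic $Q(y)=0$, and \eqref{eqkineticstsinglei3} is its unique root in $(0,1)$; moreover $x^*\to q^i_-/(q^i_++q^i_-)\in(0,1)$ as $\be_{ii}\to 0$, so for all small $\be$ the value $x^*$ stays in a fixed compact subinterval of $(0,1)$. \emph{(ii) The HJB part.} With this $x$ the system \eqref{eqHJBbotnetmultidis} reduces to the \emph{linear} system \eqref{eqHJBOpsinglei3}, solved explicitly and uniquely by \eqref{eqHJBOpsinglei4}--\eqref{eqHJBOpsinglei9}; in particular the expansions \eqref{eqHJBOpsinglei8}--\eqref{eqHJBOpsinglei9} hold with $O(\la^{-2})$ remainders which, once $\be$ has been fixed small, are uniform in the remaining data. \emph{(iii) Checking optimality.} By \eqref{eqHJBOpsinglei8}--\eqref{eqHJBOpsinglei9} the requirement \eqref{eqHJBOpsinglei2} amounts, after multiplication by $\la$, to the nonnegativity of the two bracketed leading coefficients up to an $O(\la^{-1})$ error, and these coefficients are exactly the two sides of \eqref{eqHJBOpsinglei10}; the strict form of \eqref{eqHJBOpsinglei11}, by continuity in $\be$ and boundedness of $x^*$, gives the strict form of \eqref{eqHJBOpsinglei10}, hence strictly positive leading coefficients, so \eqref{eqHJBOpsinglei2} holds strictly for all large $\la$. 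This both verifies the consistency condition and shows that $\hat u^i$ is non-degenerate (the minimum in \eqref{eqHJBbotnetmultidis} is attained only at $i$, since $g(iI)<g(jI)$ and $g(iS)<g(jS)$). \emph{(iv) Stability and uniqueness.} Stability is the eigenvalue computation already performed: $\xi_1^j,\xi_2^j<0$ trivially and $\xi_i<0$ precisely because $x^*>\tfrac12\bigl(1-(q^i_++q^i_-)/\be_{ii}\bigr)$ by \eqref{eqkineticstsinglei3}, so Grobman--Hartman applies; uniqueness is immediate, since steps (i) and (ii) each produced a unique object.

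For the converse, suppose that for every sufficiently large $\la$ the stationary MFG consistency problem admits a solution whose optimal control is $\hat u^i$. For each such $\la$ the kinetic part forces $x$ to be the fixed point of step (i), and optimality of $\hat u^i$ is precisely the validity of \eqref{eqHJBOpsinglei2}; since $g$ then solves the linear system \eqref{eqHJBOpsinglei3}, it is given by \eqref{eqHJBOpsinglei4}--\eqref{eqHJBOpsinglei9}. Substituting the expansions \eqref{eqHJBOpsinglei8}--\eqref{eqHJBOpsinglei9} into \eqref{eqHJBOpsinglei2}, multiplying by $\la$ and letting $\la\to\infty$ kills the $O(\la^{-1})$ correction and leaves the nonnegativity of the bracketed leading coefficients, i.e.\ \eqref{eqHJBOpsinglei10}.

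The only genuinely delicate point is the bookkeeping of the two asymptotic parameters and of the remainder terms: one must first fix $\be$ small --- so that $x^*$, and hence every coefficient occurring in \eqref{eqHJBOpsinglei7}--\eqref{eqHJBOpsinglei9}, stays away from its degenerate value and \eqref{eqHJBOpsinglei10} genuinely follows from \eqref{eqHJBOpsinglei11} --- and only afterwards send $\la\to\infty$. Since $x^*$ and the right-hand sides of \eqref{eqHJBOpsinglei8}--\eqref{eqHJBOpsinglei9} depend smoothly on $(\be,\de,\la^{-1})$ over the relevant compact ranges, the $O(\la^{-2})$ terms are uniform and the estimates close; the discount $\de>0$ is held fixed throughout and enters only through the explicit denominators.
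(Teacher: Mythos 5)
Your proposal is correct and follows essentially the same route as the paper: the proposition is proved there precisely by assembling the preceding computations --- collapse of the kinetic system to the quadratic with root \eqref{eqkineticstsinglei3}, the eigenvalue/Grobman--Hartman stability check, the explicit solution \eqref{eqHJBOpsinglei4}--\eqref{eqHJBOpsinglei9} of the linear HJB system, and the reduction of the consistency inequalities \eqref{eqHJBOpsinglei2} to \eqref{eqHJBOpsinglei10} in the leading order in $\la$ and then to \eqref{eqHJBOpsinglei11} in the leading order in $\be$. Your explicit remark on the order of the limits (fix $\be$ small first, then let $\la\to\infty$) is a sensible clarification of what the paper leaves implicit, but it does not change the argument.
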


Let us turn to control $[i(I),k(S)]$ with $k\neq i$ denoting it by $\hat u^{i,k}$:
\[
\hat u^{i,k}(jS)=k, \quad \hat u^{i,k}(jI)=i, \quad j=1, \cdots, d.
\]


The fixed point condition under $u^{com}=\hat u^{i,k}$ takes the form

\begin{equation}
 \label{eqkineticstsingleik1}
\left\{
\begin{aligned}
& x_{iS} q_-^i  -x_{iI} q_+^i +\sum_j x_{iS}x_{jI} \be_{ji} +\la \sum_{j\neq i} x_{jI} =0 \\
& -x_{iS} q_-^i +x_{iI} q_+^i -\sum_j x_{iS}x_{jI} \be_{ji} -\la x_{iS}=0 \\
& x_{kS} q_-^k  -x_{kI} q_+^k +\sum_j x_{kS}x_{jI} \be_{jk} -\la x_{kI} =0 \\
& -x_{kS} q_-^i +x_{kI} q_+^k -\sum_j x_{kS}x_{jI} \be_{jk} +\la \sum_{j\neq k} x_{jS}=0 \\
& x_{lS} q_-^l  -x_{lI} q_+^l +\sum_j x_{lS}x_{jI} \be_{jl} -\la x_{lS} =0 \\
& -x_{lS} q_-^l +x_{lI} q_+^l -\sum_j x_{lS}x_{jI} \be_{jl} -\la x_{lI} =0,
 \end{aligned}
\right.
 \end{equation}
 where $l\neq i,k$.

Adding the last two equations yields $x_{lI}+x_{lS}=0$ and hence $x_{lI}=x_{lS}=0$ for all $l \neq i,k$, as one could expect.
Consequently, for indices $i,k$ the system gets the form

\begin{equation}
 \label{eqkineticstsingleik2}
\left\{
\begin{aligned}
& x_{iS} q_-^i  -x_{iI} q_+^i + x_{iS}x_{iI} \be_{ii} + x_{iS}x_{kI} \be_{ki} +\la x_{kI} =0 \\
& -x_{iS} q_-^i +x_{iI} q_+^i -x_{iS}x_{iI} \be_{ii} - x_{iS}x_{kI} \be_{ki} -\la x_{iS}=0 \\
& x_{kS} q_-^k  -x_{kI} q_+^k + x_{kS}x_{kI} \be_{kk} + x_{kS}x_{iI} \be_{ik} -\la x_{kI}  =0 \\
& -x_{kS} q_-^i +x_{kI} q_+^k - x_{kS}x_{kI} \be_{kk} - x_{kS}x_{iI} \be_{ik} +\la x_{iS} =0
 \end{aligned}
 \right.
 \end{equation}

Adding the first two equation (or the last two equations) yields $x_{kI}=x_{iS}$.
Since by normalization
\[
x_{kS}=1- x_{iS}-x_{kI}-x_{iI}= 1-x_{iI}-2x_{kI},
\]
we are left with two equations only:

\begin{equation}
 \label{eqkineticstsingleik3}
\left\{
\begin{aligned}
& x_{kI} q_-^i  -x_{iI} q_+^i + x_{kI}x_{iI} \be_{ii} + x^2_{kI} \be_{ki} +\la x_{kI} =0 \\
& (1-x_{iI}-2x_{kI}) (q_-^k  +x_{kI} \be_{kk} + x_{iI} \be_{ik}) -(\la +q^k_+)x_{kI}  =0.
 \end{aligned}
 \right.
 \end{equation}

 From the first equation we obtain
 \[
 x_{iI}=\frac{ \la x_{kI}+\be_{ki}x_{kI}^2+q^i_-x_{kI}}{q^i_+-x_{kI}\be_{ii}}=\frac{ \la x_{kI}}{q^i_+-x_{kI}\be_{ii}}(1+O(\la^{-1})).
 \]
 Hence $x_{kI}$ is of order $1/\la$, and therefore
 \begin{equation}
 \label{eqkineticstsingleik4}
x_{iI}=\frac{ \la x_{kI}}{q^i_+}(1+O(\la^{-1})) \Longleftrightarrow x_{kI}=\frac{x_{iI} q^i_+}{\la}(1+O(\la^{-1})).
 \end{equation}

 In the major order in large $\la$ asymptotics, the second equation of \eqref{eqkineticstsingleik3} yields
 \[
 (1-x_{iI})(q^k_- +\be_{ik} x_{iI})-q^i_+x_{iI}=0
 \]
 or  for $y=x_{iI}$
 \[
Q(y)= \be_{ik}y^2+y(q^i_+ -\be_{ik} +q^k_-) -q^k_-=0,
\]
which is effectively the same equation as the one that appeared in the analysis of the control $[i(I),i(S)]$.
It has the unique solution on the interval $(0,1)$:
\begin{equation}
\label{eqkineticstsingleik5}
x_{iI}^*=\frac{1}{2\be_{ik}}\left[ \be_{ik}-q^i_+-q^k_-
+\sqrt{(\be_{ik} +q^k_-)^2+(q^i_+)^2-2 q^i_+ (\be_{ik} -q^k_-)}\right].
\end{equation}

Let us note that for small $\be_{ik}$ it expands as
\begin{equation}
\label{eqkineticstsingleik6}
x_{iI}^*=\frac{q^k_-}{q^k_-+q^i_+}+O(\be)=\frac{q^k_-}{q^k_-+q^i_+}+\frac{q^k_- q^i_+ }{(q^k_-+q^i_+)^3}\be +O(\be^2).
\end{equation}

Similar (a bit more lengthy) calculations, as for the control $[i(I),i(S)]$ show that
the obtained fixed point of evolution \eqref{eqmainkineticbotnetmultiev} is always stable.
We omit the detail, as they are the same as given in \cite{KolBen} for the case $d=2$.

Let us turn to the HJB equation \eqref{eqHJBOpsinglei1}, which under control $[i(I),k(S)]$ takes the form
\begin{equation}
 \label{eqHJBOpsingleik3}
\left\{
\begin{aligned}
& q^i_+(g(iS)-g(iI)) +w^i_I =\de g(iI), \\
& \la (g(kS)-g(iS))+\tilde q^i_-(g(iI)-g(iS))+w_S^i =\de g(iS), \\
& \la (g(iI)-g(kI)) +q^k_+(g(kS)-g(kI)) +w_I^k =\de g(kI),  \\
& \tilde q^k_- (g(kI)-g(kS))+w^k_S=\de g(kS) \\
& \la (g(iI)-g(jI)) +q^j_+(g(jS)-g(jI)) +w_I^j =\de g(jI), \quad j\neq i,k, \\
& \la (g(kS)-g(jS)) +\tilde q^j_- (g(jI)-g(jS))+w_S^j =\de g(jS), \quad j\neq i,k,
\end{aligned}
\right.
\end{equation}
supplemented by the consistency condition
\begin{equation}
 \label{eqHJBOpsingleik2}
g(iI) \le g(jI), \quad g(kS) \le g(jS),
\end{equation}
for all $j$,
where we introduced the notation
\begin{equation}
 \label{eqHJBOpsingleik4}
\tilde q^j_- =\tilde q^j_-(i,k)= q^j_- + \be_{ij}x_{iI}+\be_{kj}x_{kI}.
\end{equation}

The first four equations do not depend on the rest of the system and can be solved independently.
To begin with, we use the first and the fourth equation to find
\begin{equation}
 \label{eqHJBOpsingleik5}
g(iS)= g(iI)+\frac{\de g(iI)-w^i_I}{q^i_+}, \quad g(kI)= g(kS)+\frac{\de g(kS)-w^k_S}{\tilde q^k_-}.
\end{equation}
Then the second and the third equations can be written as the system for the variables $g(kS)$ and $g(iI)$:
\[
\left\{
\begin{aligned}
& \la g(kS)-(\la +\de)g(iI) -(\la +\de+\tilde q^i_-)\frac{\de g(iI)-w^i_I}{q^i_+} +w_S^i =0 \\
& \la g(iI)- (\la +\de) g(kS)-(\la+\de +q^k_+)\frac{\de g(kS)-w^k_S}{\tilde q^k_-} +w_I^k =0,
\end{aligned}
\right.
\]
or simpler as
\begin{equation}
 \label{eqHJBOpsingleik6}
\left\{
\begin{aligned}
& \la q^i_+ g(kS)-[\la (q^i_+ +\de) +\de(q^i_++\tilde q^i_-+\de)]g(iI) = -w^i_I(\la +\de +\tilde q^i_-)-w_S^i q^i_+ \\
& [\la (\tilde q^k_- +\de) +\de (\tilde q^k_-+q^k_++\de)] g(kS) -\la \tilde q^k_- g(iI) =w^k_I \tilde q^k_-+w^k_S(\la+\de +q^k_+).
\end{aligned}
\right.
\end{equation}

Let us find the asymptotic behavior of the solution for large $\la$. To this end
let us write
\[
g(iS)=g^0(iS)+\frac{g^1(iS)}{\la} + O(\la^{-2})
\]
with similar notations for other values of $g$.
Dividing \eqref{eqHJBOpsingleik6} by $\la$ and preserving only the leading terms in $\la$
we get the system
\begin{equation}
 \label{eqHJBOpsingleik7}
\left\{
\begin{aligned}
& q^i_+ g^0(kS)-(q^i_+ +\de) g^0(iI) = -w^i_I, \\
& (\tilde q^k_- +\de) g^0(kS) -\tilde q^k_- g^0(iI) =w^k_S.
\end{aligned}
\right.
\end{equation}
Solving this system and using \eqref{eqHJBOpsingleik5} to find the corresponding leading terms $g^0(iS)$, $g^0(kI)$ yields

\begin{equation}
 \label{eqHJBOpsingleik8}
 \begin{aligned}
& g^0(iS)=g^0(kS)=\frac{1}{\de} \frac{\tilde q^k_-w^i_I+q^i_+w^k_S+\de w^k_S}{\tilde q^k_-+q^i_+ +\de}, \\
& g^0(kI)=g^0(iI)=\frac{1}{\de} \frac{\tilde q^k_-w^i_I+q^i_+w^k_S+\de w^i_I}{\tilde q^k_-+q^i_+ +\de}.
\end{aligned}
\end{equation}

The remarkable equations $g^0(iS)=g^0(kS)$ and $g^0(kI)=g^0(iI)$ arising from the calculations have natural interpretation:
for instantaneous execution of personal decisions the discrimination between strategies $i$ and $j$ is not possible.
Thus to get the conditions ensuring \eqref{eqHJBOpsingleik2} we have to look for the next order of expansion in $\la$.

Keeping in \eqref{eqHJBOpsingleik6} the terms of zero-order in $1/\la$ yields the system
\begin{equation}
 \label{eqHJBOpsingleik9}
\left\{
\begin{aligned}
& q^i_+ g^1(kS)-(q^i_+ +\de) g^1(iI) =\de(q^i_++\tilde q^i_-+\de)g^0(iI) -w^i_I(\de +\tilde q^i_-)-w_S^i q^i_+ \\
& (\tilde q^k_- +\de) g^1(kS) -\tilde q^k_- g^1(iI)= -\de (\tilde q^k_-+q^k_++\de) g^0(kS) +w^k_I \tilde q^k_- +w^k_S(\de +q^k_+).
\end{aligned}
\right.
\end{equation}
Taking into account \eqref{eqHJBOpsingleik8}, conditions $g(iI)\le g(kI)$ and $g(kS)\le g(iS)$ turn to
\begin{equation}
 \label{eqHJBOpsingleik10}
\tilde q^k_- g^1(iI)\le g^1(kS)(\tilde q^k_- + \de), \quad q^i_+ g^1(kS)\le g^1(iI) (q^i_+ +\de).
\end{equation}

Solving \eqref{eqHJBOpsingleik9} we obtain
\begin{equation}
 \label{eqHJBOpsingleik11}
\begin{aligned}
& g^1(kS)\de (\tilde q^k_-+q^i_+ +\de) =\tilde q^k_- [ q^i_+ w^i_S + (q^i_+ +\de) w^k_I + (\tilde q^i_- -\tilde q^k_- -q^i_+ -\de)w^i_I ] \\
& \quad \quad \quad +[q^i_+(q^k_+ -\tilde q^k_- -q^i_+)+\de (q^k_+ -q^i_+)]w^k_S, \\
& g^1(iI)\de (\tilde q^k_-+q^i_+ +\de) =q^i_+ [ \tilde q^k_- w^k_I + (\tilde q^k_- +\de) w^i_S +(q^k_+ -q^i_+ -\tilde q^k_- -\de)w^k_S ] \\
& \quad \quad \quad +[\tilde q^k_-(\tilde q^i_--q^i_+ -\tilde q^k_-)+\de (\tilde q^i_- -\tilde q^k_-)]w^i_I, \\
\end{aligned}
\end{equation}

We can now check the conditions \eqref{eqHJBOpsingleik10}. Remarkably enough the r.h.s and l.h.s. of both inequalities always coincide for $\de=0$,
so that the actual condition arises from comparing higher terms in $\de$. In the first order with respect to the expansion in small $\de$
conditions \eqref{eqHJBOpsingleik10} turn out to take the following simple form
\begin{equation}
 \label{eqHJBOpsingleik12}
\tilde q^k_- (w^k_I-w^i_I) +w^k_S(q^k_+-q^i_+) \ge 0, \quad q^i_+ (w^i_S-w^k_S) +w^i_I(\tilde q^i_- -\tilde q^k_-) \ge 0.
\end{equation}

 From the last two equations of \eqref{eqHJBOpsingleik3} we can find $g(jS)$ and $g(jI)$ for $j\neq i,k$ yielding
\begin{equation}
 \label{eqHJBOpsingleik13}
\begin{aligned}
& g(jI) =g(iI) +\frac{1}{\la} [w^j_I-\de g(iI)+q^j_+ (g(iI)-g(kS))] +O(\la ^{-2}),   \\
& g(jS) =g(kS) +\frac{1}{\la} [w^j_S-\de g(kS)+\tilde q^j_- (g(iI)-g(kS))] +O(\la ^{-2}).
\end{aligned}
\end{equation}
From these equations we can derive the rest of the conditions \eqref{eqHJBOpsingleik2}, namely that $g(iI) \le g(jI)$ for $j\neq k$ and $g(kS) \le g(jS)$ for $j\neq i$.
In the first order in the small $\de$ expansion they become
\begin{equation}
 \label{eqHJBOpsingleik14}
q^j_+(w^i_I-w^k_S)+w^j_I (\tilde q^k_-+q^i_+) \ge 0, \quad \tilde q^j_-(w^i_I-w^k_S)+w^j_S (\tilde q^k_-+q^i_+) \ge 0.
\end{equation}

Since for small $\be_{ij}$, the difference $\tilde q^j_-  -q^j_-$ is small, we proved the following result.

\begin{prop}
\label{propstrikop}
Assume
\begin{equation}
 \label{eq1propstrikop}
\begin{aligned}
& q^j_+(w^i_I-w^k_S)+w^j_I (q^k_-+q^i_+) > 0, \quad j\neq k,  \\
& q^j_-(w^i_I-w^k_S)+w^j_S (q^k_-+q^i_+) > 0, \quad j\neq i, \\
& q^k_- (w^k_I-w^i_I) +w^k_S(q^k_+-q^i_+) > 0, \quad q^i_+ (w^i_S-w^k_S) +w^i_I(q^i_- -q^k_-) > 0.
\end{aligned}
\end{equation}
Then for sufficiently large $\la$, small $\de$ and small $\be_{ij}$  there exists a unique solution
to the stationary MFG consistency problem  \eqref{eqmainkineticbotnetmultist} and
\eqref{eqHJBbotnetmultidis}  with the optimal control $\hat u^{i,k}$, the stationary distribution is concentrated on strategies $i$ and $k$
with $x_{iI}^*$ being given by \eqref{eqkineticstsingleik5} or \eqref{eqkineticstsingleik6} up to terms of order $O(\la^{-1})$, and
it is stable; the optimal payoffs are given by \eqref{eqHJBOpsingleik8}, \eqref{eqHJBOpsingleik11}, \eqref{eqHJBOpsingleik13}.

Conversely, if for all sufficiently large $\la$ and small $\de$ there exists a solution
to the stationary MFG consistency problem  \eqref{eqmainkineticbotnetmultist} and
\eqref{eqHJBbotnetmultidis}  with the optimal control $\hat u^{i,k}$, then
\eqref{eqHJBOpsingleik12} and \eqref{eqHJBOpsingleik14} hold.
\end{prop}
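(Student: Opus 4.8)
The plan is to assemble the claim from the pieces already established in the running analysis of the control $\hat u^{i,k}$, organized in the same order as for Proposition \ref{propstriiop}: first the kinetic (fixed-point and stability) side, then the HJB side, then the matching of the consistency inequalities, and finally the asymptotic bookkeeping that lets us replace $\tilde q^j_-$ by $q^j_-$. For the direct implication, I would argue as follows. Under $u^{com}=\hat u^{i,k}$ the fixed-point system \eqref{eqkineticstsingleik1} was shown to force $x_{lI}=x_{lS}=0$ for $l\neq i,k$ and $x_{kI}=x_{iS}$, reducing to \eqref{eqkineticstsingleik3}; solving it gives $x_{kI}=O(\la^{-1})$ and, in the leading order, the quadratic $Q(y)=\be_{ik}y^2+y(q^i_+-\be_{ik}+q^k_-)-q^k_-=0$ with unique root $x_{iI}^*\in(0,1)$ given by \eqref{eqkineticstsingleik5}, expanding as \eqref{eqkineticstsingleik6} for small $\be_{ik}$. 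One checks (exactly as in the $\hat u^i$ case, citing \cite{KolBen} for $d=2$) that this fixed point is stable: the decoupled $(x_{jI},x_{jS})$-blocks for $j\neq i,k$ contribute eigenvalues near $-\la$ and $-\la-(q^j_++q^j_-+\cdots)$, all negative, while the remaining block is controlled by the sign condition that the root of $Q$ satisfies automatically.

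Next I would treat the HJB side. Under $\hat u^{i,k}$ the equations take the form \eqref{eqHJBOpsingleik3}; the first four decouple and, using \eqref{eqHJBOpsingleik5} to eliminate $g(iS),g(kI)$, reduce to the $2\times2$ system \eqref{eqHJBOpsingleik6} for $g(kS),g(iI)$. Expanding in $1/\la$, the leading order \eqref{eqHJBOpsingleik7} yields \eqref{eqHJBOpsingleik8} with the degeneracy $g^0(iS)=g^0(kS)$, $g^0(kI)=g^0(iI)$, so the consistency inequalities \eqref{eqHJBOpsingleik2} must be read off at the next order: the first-order system \eqref{eqHJBOpsingleik9} gives \eqref{eqHJBOpsingleik11}, and the inequalities $g(iI)\le g(kI)$, $g(kS)\le g(iS)$ become \eqref{eqHJBOpsingleik10}. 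The key observation is that both sides of \eqref{eqHJBOpsingleik10} coincide at $\de=0$, so the effective constraint appears at first order in small $\de$, namely \eqref{eqHJBOpsingleik12}. The remaining coordinates $g(jS),g(jI)$, $j\neq i,k$, come from the last two equations of \eqref{eqHJBOpsingleik3} via \eqref{eqHJBOpsingleik13}, and the conditions $g(iI)\le g(jI)$, $g(kS)\le g(jS)$ reduce, again in first order in $\de$, to \eqref{eqHJBOpsingleik14}. This proves the converse statement verbatim. For the direct statement, one runs the implications backwards: assuming the strict inequalities \eqref{eq1propstrikop}, which are precisely \eqref{eqHJBOpsingleik12}--\eqref{eqHJBOpsingleik14} with $\tilde q^j_-$ replaced by $q^j_-$ and with strict sign, continuity in $\de\to0$, $\la^{-1}\to0$, $\be_{ij}\to0$ (note $\tilde q^j_- - q^j_- = \be_{ij}x_{iI}+\be_{kj}x_{kI}=O(\be)$ by \eqref{eqHJBOpsingleik4} and the bound $x_{kI}=O(\la^{-1})$) guarantees that the perturbed inequalities \eqref{eqHJBOpsingleik10} still hold for sufficiently large $\la$, small $\de$, small $\be_{ij}$; then the solution $(x,g)$ we constructed genuinely satisfies the full HJB \eqref{eqHJBbotnetmultidis} with minimizer $\hat u^{i,k}$ and the full fixed-point system \eqref{eqmainkineticbotnetmultist}, hence solves the stationary MFG consistency problem, and uniqueness follows since the system determining $x$ and $g$ has (for these parameter ranges) a single admissible branch. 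Stability was already checked.

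The step I expect to be the main obstacle is the uniform control of the perturbation from the idealized limit back to finite $\la$, small-but-positive $\de$, and small-but-positive $\be_{ij}$: one has a \emph{triple} asymptotic expansion, and the three small parameters enter at different orders (the $\la$-expansion produces the $g^1$ terms where the $\de$-expansion then has to be performed, and only after that does the $\be$-dependence through $\tilde q^j_-$ become visible), so care is needed to make the error estimates genuinely uniform and to ensure the orders in which limits are taken do not matter for the strict inequalities. A secondary technical point is verifying that \eqref{eqHJBOpsingleik6} is nonsingular (its determinant is bounded away from zero for small $\de$ and large $\la$, since it is $\approx \la^2 q^i_+\tilde q^k_- \cdot(\text{positive})$), which is what makes the expansions \eqref{eqHJBOpsingleik8}, \eqref{eqHJBOpsingleik11} legitimate; this is routine but must be stated. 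Everything else is bookkeeping parallel to the already-completed case $\hat u^i$ and to \cite{KolBen}.
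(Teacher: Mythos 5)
Your proposal follows essentially the same route as the paper: the paper's ``proof'' of Proposition \ref{propstrikop} is precisely the preceding derivation \eqref{eqkineticstsingleik1}--\eqref{eqHJBOpsingleik14} (fixed-point reduction to strategies $i,k$, the quadratic for $x_{iI}^*$, stability by reference to \cite{KolBen}, the decoupled $2\times 2$ HJB block expanded first in $\la^{-1}$ and then in $\de$, and finally the replacement of $\tilde q^j_-$ by $q^j_-$ for small $\be$), which is exactly the sequence you reconstruct. Your added remarks on the nonsingularity of \eqref{eqHJBOpsingleik6} and the uniformity of the triple expansion in $\la^{-1},\de,\be$ are sensible points that the paper leaves implicit, but they do not change the argument.
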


\section{Main result}
\label{sectimedepsol}

By the general result already mentioned above, see \cite{BasRa14},
a solution of MFG consistency problem constructed above and considered on a finite time horizon
will define an $\ep$-Nash equilibrium for the corresponding game of finite number of players.
However, solutions given by Propositions \ref{propstriiop} and \ref{propstrikop} work only when the initial distribution
and terminal payoff are exactly those given by the stationary solution. Of course, it is natural to ask what happens for other initial
conditions. Stability results of  Propositions \ref{propstriiop} and \ref{propstrikop} represent only a step in the right direction here,
as they ensure stability only under the assumption that all (or almost all) players use from the very beginning
the corresponding stationary control, which might not be the case. To analyse the stability properly, we have to consider
the full time-dependent problem. For possibly time varying evolution $x(t)$ of the distribution, the time-dependent HJB equation
for the discounted optimal payoff $e^{-t\de}g$ of an individual player with any time horizon $T$ has form \eqref{eqHJBbotnetmultievdisc}.

In order to have a solution with a stationary $u$ we have to show that solving the linear equation obtained from
\eqref{eqHJBbotnetmultievdisc} by fixing this control will be consistent in the sense that this control
will actually give minimum in \eqref{eqHJBbotnetmultievdisc} in all times.

For definiteness, let us concentrate on the stationary control $\hat u^i$, the corresponding linear equation getting the form
\begin{equation}
\label{eqHJBOpsingleidisc1}
\left\{
\begin{aligned}
& \dot g(iI) + q^i_+(g(iS)-g(iI)) +w^i_I =\de g(iI), \\
& \dot g(iS) + q^i_-(g(iI)-g(iS)) + \sum_{k}\be_{ki} x_{kI}(t)(g(iI)-g(iS))+w_S^i =\de g(iS), \\
& \dot g(jI) + \la (g(iI)-g(jI)) +q^j_+(g(jS)-g(jI)) +w_I^j =\de g(jI), \quad j\neq i, \\
& \dot g(jS) + \la (g(iS)-g(jS)) +q^j_- (g(jI)-g(jS)) \\
& \quad \quad \quad + \sum_k\be_{kj} x_{kI}(t)(g(jI)-g(jS))+w_S^j =\de g(jS), \quad j\neq i,
\end{aligned}
\right.
\end{equation}
with the consistency requirement \eqref{eqHJBOpsinglei2}, but which has to hold now for time-dependent solution $g$.

\begin{theorem}
\label{thmainsingle}
Assume the strengthened form of \eqref{eqHJBOpsinglei11} holds, that is

\begin{equation}
 \label{eq1thmainsingle}
\frac{w^j_I-w^i_I}{w^i_I-w^i_S}  > \frac{q^j_+ -q^i_+}{q_-^i+q_+^i +\de}, \quad
\frac{w^j_S-w^i_S}{w^i_I-w^i_S}  > \frac{q^i_- -q^j_-}{q_-^i+q_+^i +\de}
\end{equation}
for all $j\neq i$. Assume moreover that
 \begin{equation}
 \label{eq2thmainsingle}
q^j_+ >q^i_+, \quad q^i_- > q^j_-
\end{equation}
for all $j\neq i$.
Then for any $\la>0$ and all sufficiently small $\be_{ij}$ the following holds.
For any $T >t$, any initial distribution $x(t)$  and any terminal values $g_T$ such that $g_T(jI)-g_T(jS) \ge 0$
for all $j$,  $g_T(iI)-g_T(iS)$ is sufficiently small and
 \begin{equation}
 \label{eq3thmainsingle}
g_T(iI) \le g_T(jI) \quad \text{and} \quad  g_T(iS) \le g_T(jS), \quad j\neq i,
\end{equation}
there exists a unique solution to the discounted MFG consistency equation such that $u$ is stationary and equals $\hat u^i$ everywhere.
Moreover, this solution is such that, for large $T-t$, $x(s)$ tends to the fixed point of Proposition  \ref{propstriiop} for $s\to T$ and
$g_s$ stays near the stationary solution of Proposition  \ref{propstriiop}  almost all time apart from a small initial period around $t$ and some
final period around $T$.
\end{theorem}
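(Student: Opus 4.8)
The plan is to decouple the forward-backward system by exploiting the special structure of the control $\hat u^i$: once $u^{com}=\hat u^i$ is fixed, the kinetic equations \eqref{eqkineticstsinglei2} for $x$ do not involve $g$ at all, so we may solve the forward ODE for $x(s)$ on $[t,T]$ first, with the given initial condition $x(t)$, and then feed the resulting $x_{kI}(s)$ into the linear backward system \eqref{eqHJBOpsingleidisc1} for $g$. The only thing that needs checking is the consistency requirement \eqref{eqHJBOpsinglei2}, i.e. that $\hat u^i$ actually realizes the minimum in \eqref{eqHJBbotnetmultievdisc} for all $s\in[t,T]$; everything else is existence/uniqueness for linear ODEs with Lipschitz (indeed bounded measurable) coefficients, which was already noted after \eqref{eqHJBbotnetmultievdisc}.

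First I would analyse the forward flow. As in the stationary computation, adding the $j\neq i$ equations of \eqref{eqkineticstsinglei2} shows that $x_{jI}+x_{jS}$ decays like $e^{-\la(s-t)}$, and the eigenvalue analysis already carried out in Section \ref{secstationary} (the eigenvalues $\xi_i<0$, $\xi^j_{1,2}<0$, uniformly for small $\be_{ij}$) shows the fixed point $x_{iI}=x^*$, $x_{jI}=x_{jS}=0$ ($j\neq i$) is exponentially stable. Hence for any initial $x(t)$ the solution converges to this fixed point as $s\to T$ at an exponential rate uniform in $T$, and in particular $x_{kI}(s)$ is small (order $\be$ plus exponentially small transient) for all but a short initial layer near $t$ — this gives the turnpike behavior of $x$ asserted at the end of the theorem.

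Next I would treat the backward system for $g$. Introduce the differences $h_j(s)=g_s(jI)-g_s(jS)$ and $p_j(s)=g_s(jI)-g_s(iI)$, $r_j(s)=g_s(jS)-g_s(iS)$ for $j\neq i$. Subtracting equations in \eqref{eqHJBOpsingleidisc1} shows $h_i$ satisfies a scalar linear ODE $\dot h_i = (q^i_-+q^i_++\sum_k\be_{ki}x_{kI}(s)+\de)h_i-(w^i_I-w^i_S)$ run backward from $h_i(T)=g_T(iI)-g_T(iS)$; since the coefficient is positive and bounded below, the backward evolution is a contraction toward the (slowly varying) ``instantaneous equilibrium'' $\approx (w^i_I-w^i_S)/(q^i_-+q^i_++\be_{ii}x^*+\de)>0$, so $h_i(s)\ge0$ for all $s$ provided $h_i(T)$ is small and nonnegative (this is exactly why the hypothesis ``$g_T(iI)-g_T(iS)$ sufficiently small'' appears). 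Similarly each $h_j$ stays nonnegative given $h_j(T)\ge0$. Then $p_j$ satisfies $\dot p_j=(\la+\de)p_j+q^j_+h_j-q^i_+h_i-(w^j_I-w^i_I)$ backward from $p_j(T)\ge0$: because of the large damping $\la+\de$, $p_j(s)$ is pinned near its instantaneous equilibrium $(w^j_I-w^i_I-q^j_+h_j+q^i_+h_i)/(\la+\de)$ except in an $O(1/\la)$ layer near $T$; using $h_j\approx h_i$ up to $O(1/\la)$ and the strengthened inequality \eqref{eq1thmainsingle} together with \eqref{eq2thmainsingle}, this equilibrium value is strictly positive for small $\be_{ij}$, and the sign near $T$ is handled by the hypothesis $p_j(T)\ge0$ in \eqref{eq3thmainsingle}; hence $p_j(s)\ge0$ throughout. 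The same argument gives $r_j(s)\ge0$. This establishes \eqref{eqHJBOpsinglei2} for all $s\in[t,T]$, hence consistency, hence the claimed existence; uniqueness follows since the consistent $u$ must be $\hat u^i$ by Proposition \ref{proponstrate} and then the linear system has a unique solution.

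Finally, the turnpike statement for $g$ follows by comparing the time-dependent solution with the stationary one of Proposition \ref{propstriiop}: the difference solves the same linear system with zero source and small terminal data and small forcing coming from $x_{kI}(s)-0$; the block-triangular structure (the fast $O(1/\la)$ and slow $O(1)$ directions, all with damping bounded below uniformly for small $\be$) makes this difference exponentially small in $\min(s-t,T-s)$, which is precisely the assertion that $g_s$ stays near the stationary solution apart from short boundary layers near $t$ and near $T$. The main obstacle I anticipate is making the sign-preservation argument for $p_j,r_j$ genuinely rigorous and uniform: one must show that the $O(1/\la)$ and $O(\de)$ and $O(\be)$ corrections do not spoil the strict inequalities coming from \eqref{eq1thmainsingle}–\eqref{eq2thmainsingle}, and in particular that the boundary layer near $T$ (where $p_j$ relaxes from $p_j(T)$ to its interior value) never drives $p_j$ negative — this needs the monotone/contraction structure of the scalar equations rather than a crude Gronwall estimate, and is the place where the hypotheses \eqref{eq2thmainsingle} and the smallness of $g_T(iI)-g_T(iS)$ are really used.
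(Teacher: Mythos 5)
Your overall strategy coincides with the paper's: fix $u=\hat u^i$, solve the forward kinetics independently of $g$, reduce the consistency requirement \eqref{eqHJBOpsinglei2} to sign conditions on the differences $g(jI)-g(iI)$, $g(jS)-g(iS)$, control everything through the explicitly solvable scalar equation for $g(iI)-g(iS)$ (your $h_i$; the paper's \eqref{eq6thmainsingle}), and get the turnpike claims from the negativity of the eigenvalues of the linearized flows. The only methodological difference is cosmetic: where you invoke Duhamel formulas and monotonicity for the difference ODEs, the paper invokes Bony's flow-invariance criterion (via \cite{redheffer}) at the boundary of the cone \eqref{eq3thmainsingle}; both reduce to the same pointwise inequality $w^j_I-w^i_I\ge q^j_+(g(jI)-g(jS))-q^i_+(g(iI)-g(iS))$, which is the paper's \eqref{eq4thmainsingle}.

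There is, however, one concrete gap. The theorem is asserted for \emph{any} $\la>0$, but at the decisive step you replace $h_j=g(jI)-g(jS)$ by $h_i+O(\la^{-1})$ and speak of $O(1/\la)$ boundary layers; none of these approximations is available for fixed finite $\la$, so your verification that the source term $w^j_I-w^i_I-q^j_+h_j+q^i_+h_i$ is nonnegative does not close. The paper instead uses the two-sided pointwise bound $0\le h_j\le h_i$, which together with $q^j_+>q^i_+$ gives $q^j_+h_j-q^i_+h_i\le (q^j_+-q^i_+)h_i$ with no reference to the size of $\la$, and then bounds $h_i$ by \eqref{eq6thmainsingle}. (This upper bound $h_j\le h_i$ is itself an additional invariance statement that must be propagated backward from $T$ alongside \eqref{eq3thmainsingle} --- the paper states it without detail --- but it is the right quantity to control; your $h_j\approx h_i$ is not, at fixed $\la$.) A second, minor point: uniqueness in the theorem means uniqueness within the class of solutions with stationary $u=\hat u^i$, so it is just uniqueness for the decoupled forward and backward linear ODEs; your appeal to Proposition \ref{proponstrate} is not needed and does not by itself exclude other consistent controls.
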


\begin{remark}
(i) The last property of our solution can be expressed by saying that the stationary solution provides the so-called turnpike
for the time-dependent solution, see e.g. \cite{KoYaTurnp} and \cite{ZasTurnp} for for reviews in stochastic and deterministic settings.
(ii) Condition  \eqref{eq2thmainsingle} is strong and can possibly be dispensed with by a more detailed analysis.
(iii) Similar time-dependent class of turnpike solutions can be constructed from the stationary control of Proposition \ref{propstrikop}.
\end{remark}

\begin{proof}
To show that starting with the terminal condition belonging to the cone specified by \eqref{eq3thmainsingle} we shall stay in this
 cone for all $t\le T$, it is sufficient to prove that on a boundary point of this cone that can be achieved by the evolution
 the inverted tangent vector of system \eqref{eqHJBOpsingleidisc1} is not directed outside of the cone. This (more or less obvious)
 observation is a performance of the general result of Bony, see e. g. \cite{redheffer}.
  From \eqref{eqHJBOpsingleidisc1} we find that
  \[
   \dot g(jI)-\dot g(iI) =(\la +\de) (g(jI)-g(iI)) +q^j_+(g(jI)-g(jS))-q^i_+(g(iI)-g(iS))  -(w_I^j-w^i_I).
   \]
   Therefore, the condition for staying inside the cone \eqref{eq3thmainsingle} for a boundary point with $g(jI)=g(iI)$ reads out as
 \begin{equation}
 \label{eq4thmainsingle}
(w_I^j-w^i_I) \ge q^j_+(g(jI)-g(jS))-q^i_+(g(iI)-g(iS)).
\end{equation}
Since
\[
0 \le g(jI)-g(jS) \le g(iI)-g(iS),
\]
a simpler sufficient condition for \eqref{eq4thmainsingle} is
 \begin{equation}
 \label{eq5thmainsingle}
(w_I^j-w^i_I) \ge (q^j_+ -q^i_+)(g(iI)-g(iS)).
\end{equation}

Subtracting the first two equations of \eqref{eqHJBOpsingleidisc1} we find that
\[
\dot g(iI) -\dot g(iS) =a(s) (g(iI)-g(iS)) -( w^i_I -w^i_S)
\]
with
\[
a(t) =q^i_+ +q^i_- +\de + \sum_{k}\be_{ki} x_{kI}(t).
\]
Consequently,
\begin{equation}
 \label{eq6thmainsingle}
g_t(iI) -g_t(iS)= \exp \{ -\int_t^T a(s) \, ds\}(g_T(iI) -g_T(iS)) +( w^i_I -w^i_S)\int_t^T  \exp \{ -\int_t^s a(\tau) \, d\tau\} ds.
\end{equation}
Therefore, as we assumed \eqref{eq2thmainsingle}, condition \eqref{eq5thmainsingle} will be fulfilled for all sufficiently small $g_T(iI) -g_T(iS)$ whenever
 \begin{equation}
 \label{eq7thmainsingle}
(w_I^j-w^i_I) > (q^j_+ -q^i_+)( w^i_I -w^i_S)\int_t^T  \exp \{ -\int_t^s a(\tau) \, d\tau\} ds.
\end{equation}
But since $a(t) \ge q^i_+ +q^i_- +\de$, we have
\[
\exp \{ -\int_t^s a(\tau) \, d\tau\}\le \exp \{-(s-t)(q^i_+ +q^i_- +\de)\},
\]
so that \eqref{eq5thmainsingle} holds if
 \begin{equation}
 \label{eq8thmainsingle}
\frac{w_I^j-w^i_I}{w^i_I -w^i_S} \ge \frac{q^j_+ -q^i_+}{q^i_+ +q^i_- +\de} \left(1- \exp \{ -(T-t)(q^i_+ +q^i_- +\de)\}\right),
\end{equation}
which is true under the first assumptions of \eqref{eq1thmainsingle} and \eqref{eq2thmainsingle}.

Similarly, to study a boundary point with $g(jS)=g(iS)$ we find that
 \[
   \dot g(jS)-\dot g(iS) =(\la +\de) (g(jS)-g(iS)) -(q^j_- + \sum_k\be_{kj} x_{kI})(g(jI)-g(jS))
   \]
   \[
    +(q^i_- +\sum_{k}\be_{ki} x_{kI} ) (g(iI)-g(iS))  -(w_S^j-w^i_S).
   \]
   Therefore, the condition for staying inside the cone \eqref{eq3thmainsingle} for a boundary point with $g(jS)=g(iS)$ reads out as
 \begin{equation}
 \label{eq9thmainsingle}
(w_S^j-w^i_S) \ge (q^i_- +\sum_{k}\be_{ki} x_{kI} ) (g(iI)-g(iS)) -(q^j_- + \sum_k\be_{kj} x_{kI})(g(jI)-g(jS)).
\end{equation}
 Now   $0 \le g(iI)-g(iS) \le g(jI)-g(jS)$, so that \eqref{eq9thmainsingle} is fulfilled if
  \begin{equation}
 \label{eq10thmainsingle}
(w_S^j-w^i_S) \ge (q^i_- +\sum_{k}\be_{ki} x_{kI} -q^j_- - \sum_k\be_{kj} x_{kI})(g(iI)-g(iS))
\end{equation}
for all times. Taking into account the requirement that all $\be_{ij}$ are sufficiently small, we find as above that
it holds under the second assumptions of \eqref{eq1thmainsingle} and \eqref{eq2thmainsingle}.

The last statement of the theorem concerning  $x(s)$ follows  from the observation that the eigenvalues of the linearized evolution
$x(s)$ are negative and well separated from zero implying the global stability of the fixed point of the evolution
 for sufficiently small $\be$. The last statement of the theorem concerning  $g(s)$ follows by similar stability argument
 for the linear evolution \eqref{eqHJBOpsingleidisc1} taking into account that away from the initial point $t$,
 the trajectory $x(t)$ stays arbitrary close to its fixed point.

\end{proof}

\end{document}